\newtheorem{theorem}{Theorem}[section]
\newtheorem{lemma}[theorem]{Lemma}
\theoremstyle{definition}
\newtheorem{definition}[theorem]{Definition}
\newcommand{\C}{\mathbb{C}}
\newcommand{\D}{\mathbb{D}}
\newcommand{\N}{\mathbb{N}}
\newcommand{\R}{\mathbb{R}}
\newcommand{\oD}{\overline \D}
\def\R{\mathbb R}
\def\C{\mathbb C\/}
\def\tA{\tilde A}
\def\qa{q^{-1}(\{a\})}
\def\qb{q^{-1}(\{b\})}
\font\tenscr=rsfs10 
\font\sevenscr=rsfs7 
\font\fivescr=rsfs5 
\def\scr{\fam\scrfam}
\def\scr{\fam\scrfam}
\def\scrK{{\scr {K}}}
\def\jred{}
\def\eps{\varepsilon}
\def\normX#1{\|#1\|_X}
\begin{document}

\title[Constructing Essential Uniform Algebras]{A General Method for Constructing Essential Uniform Algebras}
\author{J. F. Feinstein}
\address{School of Mathematical Sciences, University of Nottingham, University Park, Nottingham NG7 2RD, UK }
\email{Joel.Feinstein@nottingham.ac.uk}
\author{Alexander J. Izzo}
\address{Department of Mathematics and Statistics, Bowling Green State University, Bowling Green, OH 43403, USA}
\email{aizzo@math.bgsu.edu}


\subjclass[2000]{Primary 46J10}
\keywords{essential uniform algebra, regular algebra, normal algebra}

\begin{abstract}
A general method for constructing essential uniform algebras with prescribed properties is presented.  Using the method, the following examples are constructed: an essential, natural, regular uniform algebra on the closed unit disc; an essential, natural counterexample to the peak point conjecture on each manifold of dimension at least three; and an essential, natural uniform algebra on the unit sphere in $\C^3$  containing the ball algebra and invariant under the action of the 3-torus.  These examples show that a smoothness hypothesis in some results of Anderson and Izzo cannot be omitted.
\end{abstract}
\maketitle

\section{Introduction}

Let
$X$ be a {compact space}, and let $C(X)$ be the algebra of all continuous complex-valued functions on $X$ with the supremum norm
$ \|f\|_{X} = \sup\{ |f(x)| : x \in X \}$.  A \emph{uniform algebra} on $X$ is a closed subalgebra of $C(X)$ that contains the constant functions and separates
the points of $X$.
One can often obtain a uniform algebra $A$ with particular properties on a specific space $X$ by finding a uniform algebra $B$ with the desired properties on a subspace $E$ of $X$ and then taking $A$ to consist of those continuous functions on $X$ whose restrictions to $E$ belong to $B$.  However, sometimes it is of interest to know whether there are examples on the space $X$ that do not arise from algebras on a subspace in this trivial manner.
This issue is made precise using the 
notion of essential set and essential uniform algebra, as introduced by Bear in \cite{Bear} (see also \cite[pp. 144--147]{browder}).  The {\it essential set} $E$ for a uniform
algebra $A$ on a space $X$ is the smallest closed subset of $X$ such
that $A$ contains every continuous function on $X$ that vanishes on
$E$. Note that $A$ contains every continuous function whose
restriction to $E$ lies in the restriction of $A$ to $E$.  The
uniform algebra $A$ is said to be {\it essential} if $E=X$.
The reader may also wish to consult \cite{tomiyama}, where Tomiyama determined the connection between the essential set of $A$ and the antisymmetric decomposition of the maximal ideal space of $A$.

In this paper, we present a general method for constructing essential uniform 
algebras.  We then use the method to obtain three particular examples of essential 
uniform algebras with special properties.  These examples demonstrate contrasts 
between uniform algebras generated by smooth functions and those not generated by such functions.

{In \cite{dePaepe}, de Paepe gave a very different method for constructing essential uniform algebras with specified properties. 
However, de Paepe's algebras and the compact spaces they are defined on do not have the properties we require.}

We say that a uniform algebra $A$ on $X$ is {\it nontrivial\/} if $A\neq C(X)$, and is {\it natural\/} (on $X$)
if $X$ is the maximal ideal space of $A$ (under the usual identification of points of $X$ with evaluation functionals).  For the definitions of other terms used in this paper, see the next section.

Our main result is as follows.
\begin{theorem}\label{maintheorem}
Let $A$ be a nontrivial uniform algebra on a compact space $K$.  Let $X$ be a compact metric space every
{non-empty} open subset of which contains a nowhere dense subspace homeomorphic to $K$.  
Then there exists a sequence $\{K_n\}_{n=1}^\infty$ of pairwise disjoint, nowhere dense subspaces of $X$ each homeomorphic to $K$ such that $\bigcup_{n=1}^\infty K_n$ is dense in $X$ and ${\rm diam} (K_n)\rightarrow 0$.  If  homeomorphisms $h_n:K_n\rightarrow K$ are chosen and we set $A_n=\{f\circ h_n:f\in A\}$, then the collection of functions $\tA=\{f\in C(X): f|K_n\in A_n \hbox{\ for\ all\ $n$}\}$ is an essential uniform algebra on $X$.
The equality $\tA|K_n=A_n$ holds for all $n$.

Furthermore, the following relations hold between the properties of $A$ and the properties of $\tA$:
\item(i) $\tA$ is natural if and only if $A$ is natural;
\item(ii) $\tA$ is regular on $X$ if and only if $A$ is regular on $K$;
\item(iii) $\tA$ is normal if and only if $A$ is normal;
\item(iv) every point of $X$ is a peak point for $\tA$ if and only if every point of $K$ is a peak point for $A$;
\item(v) $\tA$ has bounded relative units if and only if $A$ has bounded relative units;
\item(vi) $\tA$ is strongly regular if and only if $A$ is strongly regular;
\item(vii) $\tA$ is a Ditkin algebra if and only if $A$ is a Ditkin algebra.

The uniform algebra $\tA$ has bounded relative units, with bound $1$, at every point of $X \setminus \bigcup_{n=1}^\infty K_n$, and (hence) each of these points is a peak point for $\tA$, at each of these points $\tA$ satisfies Ditkin's condition, and at each of these points $\tA$ is strongly regular.
\end{theorem}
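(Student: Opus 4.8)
The plan is to first establish the claim about bounded relative units, and then to obtain the three other properties by the usual routine implications. So fix a point $x_0\in X\setminus\bigcup_{n=1}^\infty K_n$ and a proper open neighbourhood $U$ of $x_0$ (the case $U=X$ being trivial); the goal is to produce $g\in\tA$ with $0\le g\le 1$ on $X$, with $g\equiv 1$ on a neighbourhood of $x_0$, and with $g\equiv 0$ on $X\setminus U$, so that in particular $\|g\|_X=1$. The one place where the hypotheses really enter is in exploiting that ${\rm diam}(K_n)\to 0$. I would introduce the equivalence relation $\sim$ on $X$ given by $x\sim y$ iff $x=y$ or $x,y\in K_n$ for some $n$, with quotient map $q\colon X\to Y:=X/{\sim}$, and first check that the graph $R=\{(x,y)\in X\times X:x\sim y\}$ is closed: if $(x_j,y_j)\in R$ with $(x_j,y_j)\to(x,y)$, then, passing to a subsequence, either $x_j=y_j$ for all $j$, giving $x=y$; or there is a single $n$ with $x_j,y_j\in K_n$ for all $j$, giving $(x,y)\in K_n\times K_n$; or $x_j,y_j\in K_{m_j}$ with the indices $m_j$ pairwise distinct, in which case $d(x_j,y_j)\le{\rm diam}(K_{m_j})\to 0$ forces $x=y$. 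In every case $(x,y)\in R$, so $R$ is closed; since $X$ is compact Hausdorff this makes $Y$ compact Hausdorff, hence normal, and $q$ a closed map.

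Next I would set $L=X\setminus U$. Then $L$ is closed, so $q(L)$ is closed in $Y$, and $[x_0]\notin q(L)$, because $q^{-1}(q(L))=L\cup\bigcup\{K_n:K_n\cap L\ne\emptyset\}$ while $x_0\notin L$ and $x_0$ lies in no $K_n$. By normality pick an open set $O$ of $Y$ with $[x_0]\in O\subseteq\overline O\subseteq Y\setminus q(L)$, and by Urysohn's lemma a continuous map $\hat g\colon Y\to[0,1]$ with $\hat g\equiv 1$ on $\overline O$ and $\hat g\equiv 0$ on $q(L)$. Put $g=\hat g\circ q$. Then $g\in C(X)$, $0\le g\le 1$, $g\equiv 1$ on the open neighbourhood $q^{-1}(O)$ of $x_0$, and $g\equiv 0$ on $X\setminus U$. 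Crucially $g$ is constant on each $K_n$, since $q$ collapses $K_n$ to a point; as $A$, and therefore each $A_n$, contains the constant functions, this gives $g|K_n\in A_n$ for all $n$, so $g\in\tA$ and $\|g\|_X=1$. As $U$ was arbitrary this says exactly that $\tA$ has bounded relative units, with bound $1$, at $x_0$.

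The remaining assertions then follow by standard arguments, using that $X$ is metrizable. Applying the construction above to $U_k=B(x_0,1/k)$ produces $g_k\in\tA$ with $0\le g_k\le 1$, $g_k\equiv 1$ near $x_0$, and $g_k\equiv 0$ off $U_k$; then $f:=\sum_{k=1}^\infty 2^{-k}g_k$ converges uniformly, so $f\in\tA$, and $0\le f\le 1$ with $f(x_0)=1$ while $f(x)<1$ for every $x\ne x_0$ (for such $x$ one has $g_k(x)=0$ for all large $k$), so $x_0$ is a peak point for $\tA$. For Ditkin's condition and strong regularity at $x_0$, let $f\in\tA$ with $f(x_0)=0$ and $\eps>0$; choosing $U$ so small that $\sup_U|f|<\eps$ and taking the corresponding relative unit $g$, the function $1-g\in\tA$ vanishes on a neighbourhood of $x_0$, $(1-g)f\in\tA$, and $\|f-(1-g)f\|_X=\|gf\|_X\le\sup_U|f|<\eps$ because $0\le g\le 1$ and $g\equiv 0$ off $U$; letting $\eps\to 0$ shows $f$ lies in the closure of $\{hf:h\in\tA,\ h\equiv 0\text{ near }x_0\}$, hence in the closure of the ideal of functions in $\tA$ that vanish near $x_0$, which yields both that $\tA$ satisfies Ditkin's condition at $x_0$ and that $\tA$ is strongly regular at $x_0$.

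The only genuinely new ingredient is the verification that the quotient $Y$ is Hausdorff; this is precisely the step that uses ${\rm diam}(K_n)\to 0$, and after it everything reduces to Urysohn's lemma and routine uniform-algebra manipulations. (When $\tA$ is natural, these notions of peak point, Ditkin's condition and strong regularity are the usual ones; in general one passes to the maximal ideal space in the standard way.)
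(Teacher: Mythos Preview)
Your proof is correct and takes essentially the same approach as the paper: form the quotient $Y=X/{\sim}$ collapsing each $K_n$ to a point, show $Y$ is Hausdorff (hence normal), pull back a Urysohn function on $Y$ to obtain a relative unit in $\tA$ that is constant on every $K_n$, and then deduce the peak point, Ditkin, and strong regularity assertions. The only variation is in the Hausdorff step---the paper (in its lemma on such quotients) constructs separating saturated open sets by hand, whereas you verify that the equivalence relation has closed graph and invoke the standard characterization for quotients of compact Hausdorff spaces; the paper also packages the Urysohn step as a general ``observation about closed sets'' for later reuse, while you argue the special case directly.
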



One could also consider a more general situation in which, rather than starting with one space $K$ and uniform algebra $A$, one deals with a sequence of spaces $K_n$, possibly not all homeomorphic, with dense union in $X$, and algebras $A_n$ on the $K_n$.  We leave this to the interested reader.

Note that every compact metric space is homeomorphic to a nowhere dense subset of the Hilbert cube \cite[Theorem~V~4]{HW}. This allows us to apply Theorem \ref{maintheorem} whenever $A$ is a nontrivial uniform algebra on a metrizable compact space $K$ in order to construct an essential uniform algebra on the Hilbert cube sharing many properties with $A$. However we are mostly interested in examples on spaces whose topological dimension is finite.

Our original motivation came from the following question: Does there exist an essential, natural, regular uniform algebra on the closed unit disc?
As an application of our main theorem, we prove that the answer is affirmative.
To our knowledge, the first example of an essential, natural, regular uniform algebra on a locally connected compact metric space was given in \cite[Example 2.9]{feinheath2010}, using the algebra $R(K)$ for a suitable compact plane set $K$.  However the set $K$ obtained was a \lq classical' Swiss cheese set, and so (although both connected and locally connected), was infinitely connected.

\begin{theorem}\label{discexample}
There exists an essential, natural, regular uniform algebra on the closed unit disc $\oD=\{z\in \C: |z|\leq 1\}$.
\end{theorem}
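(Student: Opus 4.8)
The plan is to apply Theorem~\ref{maintheorem} with a well-chosen pair $(K,A)$ and with $X=\oD$. For $X$ I take the closed unit disc, which is certainly a compact metric space. The crucial input is a nontrivial uniform algebra $A$ on a compact metric space $K$ that is simultaneously natural and regular; given such a pair, Theorem~\ref{maintheorem}(i) and (ii) immediately produce an essential uniform algebra $\tA$ on $\oD$ that is natural and regular, which is exactly the assertion. So the real work is twofold: (1) exhibit a suitable $(K,A)$, and (2) verify the topological hypothesis that every non-empty open subset of $\oD$ contains a nowhere dense subspace homeomorphic to $K$.

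For step (1), the natural candidate is $A=R(K)$ for a compact plane set $K$ of a type that is known to give a nontrivial, natural, regular uniform algebra --- for instance a suitable ``Swiss cheese'' compact set, as in the work cited (\cite{feinheath2010}); such $K$ can be taken with empty interior (so $R(K)$ is nontrivial), with $R(K)$ natural (no extra analytic structure appears in the maximal ideal space), and with $R(K)$ regular (rational approximation gives enough localised functions). One must record that such a $K$ exists as a compact, totally disconnected --- or at any rate zero-dimensional --- subset of the plane; in fact a Swiss cheese set can be arranged to be homeomorphic to the Cantor set, which is convenient for step (2).

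For step (2), once $K$ is (homeomorphic to) the Cantor set, I would use the standard fact that every non-empty open subset $U$ of $\oD$ contains a Cantor set that is nowhere dense in $\oD$: inside any small disc contained in $U$ one constructs a Cantor-type set in the usual way (a nested intersection of finite unions of small closed discs with diameters tending to $0$), which is compact, totally disconnected, perfect, metrizable, hence homeomorphic to the Cantor set, and is nowhere dense in $\oD$ since it has empty interior. This verifies the hypothesis of Theorem~\ref{maintheorem}.

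The main obstacle is step (1): producing, with a citation or a short self-contained construction, a compact plane set $K$ homeomorphic to the Cantor set for which $R(K)$ is nontrivial, natural, and regular. The naturality and nontriviality are comparatively routine for Swiss-cheese-type constructions, but ensuring genuine regularity of $R(K)$ while keeping $K$ homeomorphic to the Cantor set requires care in the choice of the radii and centres of the deleted discs; this is where I would either invoke the precise construction of \cite{feinheath2010} (adapting it so that the resulting set is totally disconnected) or cite a known regular $R(K)$ example on a Cantor-type set. Everything after that is a direct appeal to Theorem~\ref{maintheorem}.
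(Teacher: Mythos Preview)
Your overall plan---apply Theorem~\ref{maintheorem} with $X=\oD$ and $A=R(K)$ for a compact plane set $K$ with $R(K)$ nontrivial, natural, and regular---is exactly the paper's approach. Where you go astray is in step~(2): you impose the extra requirement that $K$ be homeomorphic to the Cantor set so that copies of $K$ are easy to find in open subsets of $\oD$, and you then (correctly) identify this as the main obstacle.

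This detour is both unnecessary and problematic. It is unnecessary because McKissick's Swiss cheese $K$ is already a \emph{nowhere dense} compact subset of the plane, and every non-empty open subset $U$ of $\oD$ contains a small open disc; an affine map (scaling and translation) carries $K$ homeomorphically into that disc, and the image is nowhere dense in $\oD$ since $K$ has empty interior in $\C$. That is all the topological hypothesis of Theorem~\ref{maintheorem} requires. The detour is problematic because a classical Swiss cheese is connected and locally connected (the paper itself notes this in the paragraph before Theorem~\ref{discexample}), so it is \emph{not} homeomorphic to the Cantor set; arranging a totally disconnected $K$ with $R(K)$ nontrivial and regular is far from routine and is not what the cited references provide.

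In short: drop the Cantor-set requirement, take $K$ to be McKissick's Swiss cheese as is, and observe that affine copies of $K$ sit nowhere densely inside any open subset of $\oD$. Then Theorem~\ref{maintheorem}(i) and (ii) give the result immediately.
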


However, applying a theorem of Michael Freeman \cite[Theorem~4.1]{freeman}, we shall see that no such uniform algebra can be generated by smooth functions.
(To say that a collection of functions $F$ generates $A$ means that the collection of all polynomials in the functions in $F$ is a dense subset of $A$.) 

\begin{theorem}\label{discnonexample}
There does not exist an essential, natural, regular uniform algebra generated by $C^1$-smooth functions on a compact two-dimen\-sional $C^1$-manifold-with-boundary.
\end{theorem}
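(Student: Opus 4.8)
The plan is to argue by contradiction, with Freeman's Theorem~4.1 \cite{freeman} as the decisive ingredient. So suppose $A$ were an essential, natural, regular uniform algebra generated by a family of $C^1$-smooth functions on a compact two-dimensional $C^1$-manifold-with-boundary $M$. Being two-dimensional, $M$ is nonempty and its manifold-interior $M\setminus\partial M$ is nonempty, while its boundary $\partial M$ has topological dimension at most $1$; in particular $\partial M$ is a proper closed subset of $M$. By the very definition of the essential set, the hypothesis that $A$ is essential means exactly that the essential set of $A$ equals $M$. The goal is therefore to derive from the remaining hypotheses that the essential set of $A$ is contained in $\partial M$, which contradicts this.

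The heart of the matter is Freeman's analysis of uniform algebras generated by $C^1$ functions on a $C^1$ surface. Since $A$ is natural, its maximal ideal space is $M$, so Freeman's theorem applies directly to $A$ on $M$; a consequence of it is the following dichotomy at each point $p$ of the interior of $M$: either $A$ agrees with $C(M)$ on a neighbourhood of $p$ — so that $p$ does not lie in the essential set of $A$ — or a neighbourhood of $p$ carries nontrivial analytic structure, in the sense that there is a nonconstant continuous map $\sigma$ from the open unit disc $\D$ into $M$ with $\sigma(0)=p$ such that $f\circ\sigma$ is holomorphic for every $f\in A$. The second alternative is incompatible with regularity. Indeed, suppose such a $\sigma$ exists and fix $f_0\in A$ with $f_0\circ\sigma$ nonconstant; then $\sigma^{-1}(\{p\})$ is a discrete subset of $\D$, so for suitable small $r<1$ the compact set $\sigma(\{|z|=r\})$ does not contain $p$. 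By regularity there is $g\in A$ with $g(p)=1$ and $g$ vanishing on $\sigma(\{|z|=r\})$; but $g\circ\sigma$ is holomorphic on $\D$, so the maximum modulus principle yields $1=|g(\sigma(0))|\le\max_{|z|=r}|g\circ\sigma|=0$, which is absurd. Hence the first alternative holds at every interior point of $M$, so no point of $M\setminus\partial M$ belongs to the essential set of $A$; that is, the essential set lies in $\partial M\subsetneq M$, contradicting essentiality and proving the theorem.

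The step I expect to require the most care is pinning down the precise content of Freeman's Theorem~4.1 and confirming that its hypotheses match our situation: that it covers the manifold-\emph{with-boundary} setting, the mere $C^1$ (as opposed to $C^\infty$) smoothness of the generators, and algebras generated by an arbitrary, possibly infinite, family of such functions. If Freeman's result is formulated only for finitely generated algebras, a preliminary reduction would be needed — choosing finitely many $C^1$ functions in $A$ (for instance polynomials in the generators) whose closed subalgebra is still natural on $M$ — and this reduction, which relies on $M$ being a finite-dimensional compact metric space, would itself need justification. Once Freeman's theorem is available in the form used above, the rest — the maximum-principle argument ruling out analytic structure and the bookkeeping with the essential set — is routine.
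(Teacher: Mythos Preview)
Your overall strategy---contradiction via Freeman plus regularity---matches the paper's, but the content you attribute to Freeman's Theorem~4.1 is not what that theorem says, and this creates a genuine gap.  What Theorem~4.1 actually asserts (and what the paper uses) is that the essential set of $A$ is contained in the \emph{exceptional set}
\[
E=\{p\in M: df(p)\wedge dg(p)=0\ \text{for all }f,g\in F\},
\]
where $F$ is a generating family of $C^1$ functions.  It does not deliver the clean ``local approximation versus analytic disc'' dichotomy you invoke.  The difficulty is the third possibility you have not accounted for: a point $p$ at which the complex differentials $\{df(p):f\in F\}$ span at most a complex line, yet \emph{no} single $f\in F$ has $\operatorname{Re}f,\operatorname{Im}f$ forming a local real coordinate system near $p$.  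At such a point there is no evident analytic disc through $p$, so your maximum-principle argument does not apply, and Freeman's theorem alone does not place $p$ outside the essential set.  Thus your dichotomy, and hence the conclusion that the essential set lies in $\partial M$, is not justified.

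The paper closes this gap by arguing globally rather than pointwise.  From essentiality one gets $E=M$; then if \emph{some} $f\in F$ gave a local complex chart on \emph{some} open $U$, all of $A$ would be holomorphic on $U$ in that chart (this is the second fact from Freeman, cited from \cite[p.~43]{freeman}), contradicting regularity.  Hence no generator gives a local complex chart anywhere.  Combining this with $E=M$, one computes that at every $p$ the \emph{real} differentials $\{du(p):u=\operatorname{Re}f\text{ or }\operatorname{Im}f,\ f\in F\}$ span at most a real line in $T_p^*M$.  A $C^1$ map from a $2$-manifold whose differential has real rank $\le 1$ everywhere cannot separate points, so $A$ fails to be a uniform algebra---this is the contradiction, not anything about $\partial M$.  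Your maximum-principle argument against analytic discs is perfectly valid and could replace the paper's one-line appeal to regularity in the local-chart case, but you still need the degenerate-differential case, and that forces the global ``fails to separate points'' conclusion rather than a containment of the essential set in the boundary.
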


A theorem of John Anderson and the Alexander Izzo \cite[Theorem~4.2]{AI2} classifies all the natural uniform algebras containing the identity function $z$ and generated by $C^1$-smooth functions on the closed disc.  
 The example constructed below in the proof of Theorem~\ref{discexample} is easily seen to contain the function $z$ and thus shows that this classification does not continue to hold without the smoothness hypothesis.

Our other two examples are related to the so called peak point conjecture.  This conjecture asserted that if a uniform algebra $A$ is natural on $X$ and every point of $X$ is a peak point for $A$, then $A=C(X)$.  (A point $x \in X$ is said to be a {\it peak point\/} for $A$ is there exists $f \in A$ with $f(x) = 1$ and $|f(y)| < 1$ for all $y \in X \setminus \{x\}$.)  A counterexample to this peak-point conjecture was produced by Brian Cole in 1968  \cite{Co} (or see \cite[Appendix]{browder}, or \cite[Chapter~3, Section 19]{St-book}), and other counterexamples have
been given since then  with a variety of additional properties, and including examples which are generated by smooth functions on manifolds (see, for example, \cite{Bas, FeinsteinStronglyRegular, Feinstein, FeinsteinMorris, izzo_counterexample, I2009, stout_book2}).
Nevertheless, Anderson and Izzo \cite{AI1, AI2, AI3} and Anderson, Izzo, and Wermer \cite{AIW1, AIW2} have established peak point theorems under certain smoothness hypotheses.  One of those results, which we state here, asserts that certain uniform algebras can never be essential.

\begin{theorem}[\cite{AI2}, Theorem 1.2]\label{andersonizzo}
Let $A$ be a uniform algebra on a compact
$C^1$-manifold-with-boundary $M$.  Assume that $A$ is generated by
$C^1$-smooth functions, that  $A$ is natural on $M$, and
that every point of $M$ is a peak point for $A$.  Then the uniform
algebra $A$ is not essential.  In fact, the essential set for $A$
has empty interior in $M$.
\end{theorem}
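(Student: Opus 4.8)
The plan is to deduce this from the peak-point/antisymmetry machinery for uniform algebras generated by $C^1$-smooth functions, essentially as in the original paper \cite{AI2}. First I would recall that if $A$ is generated by $C^1$-smooth functions $f_1,\dots,f_m$ on the manifold-with-boundary $M$, then the joint map $F=(f_1,\dots,f_m):M\to\C^m$ embeds (the relevant part of) $M$ and identifies $A$ with a subalgebra of the uniform algebra $P(F(M))$ of polynomials on the compact set $F(M)\subset\C^m$. The hypothesis that every point of $M$ is a peak point for $A$, together with naturality, is exactly the setup of the peak-point theorems: one shows that $A$ must contain every function of the form $g\,h$ where $g\in A$ vanishes to high order and $h\in C(M)$, and more precisely that $A$ agrees with $C(M)$ on the part of $M$ where the derivative $DF$ has full rank. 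The key geometric input is that, since $\dim M$ is small relative to the smoothness, the critical set of $F$ (points where $DF$ drops rank) is small — it has measure zero, and in fact, via a Sard-type or stratification argument, the set of critical values is too thin to support any nontrivial analytic structure.

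The main technical step, which I expect to be the chief obstacle, is the ``local'' analysis near the boundary $\partial M$ and near critical points: one must show that at every such point the algebra $A$ still contains a rich supply of functions — specifically that $A$ has the property that every continuous function on $M$ that vanishes on a neighborhood of the critical set lies in $A$. The argument here uses the peak-point hypothesis to produce, at each point, bounded approximate units supported near that point (a Bishop-type localization / Cole's theorem on the maximal ideal space), and then patches these using a partition of unity subordinate to a cover of $M\setminus K$ where $K$ is the (closed, measure-zero, hence nowhere dense once one also invokes naturality to rule out interior analytic discs) critical set. Establishing that $K$ has empty interior requires combining the smoothness-forced thinness of the critical set with the fact that on any open subset of $M\setminus K$ the algebra already equals $C$, so any essential behavior would have to be concentrated on $K$ itself.

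Once it is known that $A\supseteq\{h\in C(M): h=0 \text{ on a neighborhood of } K\}$ and that $K$ is closed with empty interior, the essential set $E$ of $A$ — being the smallest closed set such that $A$ contains all continuous functions vanishing on it — must satisfy $E\subseteq K$, since every continuous function vanishing on $K$ (a fortiori on a neighborhood of $K$, by an approximation/Tietze argument combined with the closedness of $A$) lies in $A$. Therefore $E\subseteq K$ has empty interior in $M$, so in particular $E\ne M$ and $A$ is not essential. I would close by remarking that the quantitative strengthening ``$E$ has empty interior'' is immediate from $E\subseteq K$ and the fact that $K$, being the critical set of a $C^1$ map from a low-dimensional manifold, is nowhere dense. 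The only genuinely delicate points are the boundary behavior (handled by doubling $M$ or by a collar-neighborhood argument so that boundary points are treated like interior points of a slightly larger manifold) and the verification that the approximate units from the peak-point hypothesis can be taken with a uniform bound so that the patching converges in the uniform norm.
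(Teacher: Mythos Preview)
The paper does not contain a proof of Theorem~\ref{andersonizzo}. This result is quoted from \cite[Theorem~1.2]{AI2} as background (to explain why the examples in Theorems~\ref{peakpointexample} and~\ref{sphereexample} cannot be generated by smooth functions), and the authors make no attempt to reprove it. So there is nothing in the present paper to compare your proposal against.

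That said, a brief comment on the proposal itself. The overall architecture---show that the essential set is contained in an ``exceptional'' or ``critical'' set of the generating family, then argue that set has empty interior---is indeed the shape of the argument in \cite{AI2}, and the first half (essential set $\subseteq$ exceptional set) is precisely Freeman's theorem \cite[Theorem~4.1]{freeman}, which the present paper does invoke in the proof of Theorem~\ref{discnonexample}. But two points in your sketch are off. First, you reduce to finitely many generators $f_1,\dots,f_m$; the hypothesis only says $A$ is generated by some family of $C^1$ functions, possibly infinite, and the exceptional set must be defined for the whole family. Second, and more seriously, your final paragraph asserts that the critical set $K$ is nowhere dense because it is ``the critical set of a $C^1$ map from a low-dimensional manifold.'' That is false in general: the critical set of a $C^1$ map can be the entire manifold (take constant functions). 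The emptiness of the interior of the exceptional set is \emph{not} a Sard-type consequence of smoothness alone; it is exactly where the peak-point hypothesis does the work, and that step is the real content of \cite{AI2}. Your sketch gestures at ``ruling out interior analytic discs'' via naturality but does not actually explain how peak points force the exceptional set to have empty interior, so the heart of the argument is missing.
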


Our next example shows that this theorem becomes false if the hypothesis that the algebra is generated by $C^1$-smooth functions is dropped.

\begin{theorem}\label{peakpointexample}
On every compact manifold-with-boundary $X$ of dimension greater than or equal to 3 there exists an essential,
natural uniform algebra such that every point of $X$ is a peak point.
In addition, we may arrange for this uniform algebra to have bounded relative units.
\end{theorem}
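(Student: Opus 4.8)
The plan is to apply Theorem~\ref{maintheorem} with a carefully chosen base pair $(A,K)$, so the entire content of Theorem~\ref{peakpointexample} is reduced to producing a single nontrivial uniform algebra with the right local properties and then checking a covering condition on the manifold $X$. Concretely, I would take $K$ to be a compact metric space carrying a nontrivial, natural uniform algebra $A$ for which every point of $K$ is a peak point and which, moreover, has bounded relative units: the classical Cole-type counterexamples to the peak point conjecture (see \cite{Co}, \cite[Appendix]{browder}, \cite[Chapter~3, Section 19]{St-book}) provide exactly such an $A$ on a compact metrizable $K$, and one can arrange the bounded relative units property (these algebras are normal/regular in the relevant constructions, or one invokes a known strengthening). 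Granting such $(A,K)$, Theorem~\ref{maintheorem}(i) gives that $\tA$ is natural, part~(iv) gives that every point of $X$ is a peak point for $\tA$, part~(v) gives that $\tA$ has bounded relative units, and the main assertion of the theorem gives that $\tA$ is essential. Since $A$ is nontrivial, none of these is vacuous, and $\tA\neq C(X)$ follows because $\tA|K_1=A_1\cong A\neq C(K)$.

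The one genuine hypothesis of Theorem~\ref{maintheorem} that must be verified separately is the topological condition on $X$: every non-empty open subset of $X$ must contain a nowhere dense subspace homeomorphic to $K$. Here is where the dimension restriction $\dim X\geq 3$ enters. The space $K$ underlying a Cole-type counterexample is, up to homeomorphism, a compact subset of some Euclidean space; in fact the standard constructions realize $K$ as (homeomorphic to) a compact subset of $\R^3$, indeed of dimension at most $2$ in the examples that also satisfy the peak-point and bounded-relative-units conditions. Since $X$ is a manifold of dimension $\geq 3$, every non-empty open subset $U\subseteq X$ contains a chart homeomorphic to an open subset of $\R^{\dim X}$, hence contains an embedded copy of $K$ inside a small coordinate ball; and because $\dim K<\dim X$ (or more robustly, because $K$ has empty interior in any manifold of strictly larger dimension), this copy can be taken nowhere dense in $X$. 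So the covering hypothesis holds, and Theorem~\ref{maintheorem} applies.

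I would organize the write-up as: (1) recall/cite the existence of a nontrivial, natural uniform algebra $A$ on a compact metric space $K$ with every point a peak point and with bounded relative units, and note that $K$ may be taken to embed as a finite-dimensional compact set, of topological dimension strictly less than $3$; (2) observe that for any manifold $X$ with $\dim X\geq 3$, every non-empty open subset contains a nowhere dense homeomorphic copy of $K$, so the hypotheses of Theorem~\ref{maintheorem} are met; (3) invoke Theorem~\ref{maintheorem} together with parts (i), (iv), and (v) and the essentiality conclusion to produce $\tA$ with all the desired properties, noting nontriviality as above.

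The main obstacle I anticipate is step~(1): pinning down a reference (or short argument) that the peak-point counterexample $A$ can simultaneously be taken to have bounded relative units \emph{and} to live on a space $K$ of topological dimension less than $3$ (or at least less than $\dim X$, which for the sharp statement "dimension $\geq 3$" forces the bound $\dim K\leq 2$). If a single citation does not deliver all three properties at once, the fallback is to build $A$ by the Cole root-adjunction / projective-limit procedure directly, verifying bounded relative units along the inductive construction and tracking that each stage keeps the space within a fixed finite-dimensional Euclidean space (so that the inverse limit, being a subspace of a countable product of bounded-dimension spaces, can be re-embedded in low dimension, or is at any rate strictly lower-dimensional than $X$); this is routine but slightly delicate bookkeeping. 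Everything after step~(1) is a formal consequence of Theorem~\ref{maintheorem}.
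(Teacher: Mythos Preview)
Your overall strategy---reduce everything to a single application of Theorem~\ref{maintheorem} once a suitable base pair $(A,K)$ is in hand---is exactly the paper's approach, and your identification of steps (2) and (3) is correct and complete. The gap is precisely where you yourself flag it, in step~(1), and your proposed fallback does not close it.

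The Cole construction is an inverse limit of root extensions; its maximal ideal space sits naturally in an infinite product of discs and is typically infinite-dimensional (certainly not embeddable in $\R^3$). Your parenthetical ``the inverse limit, being a subspace of a countable product of bounded-dimension spaces, can be re-embedded in low dimension'' is false: the Hilbert cube is a countable product of intervals. So neither the direct citation of Cole nor the fallback delivers a $K$ that embeds in $\R^3$, and without that the covering hypothesis of Theorem~\ref{maintheorem} fails on a $3$-manifold.

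The paper avoids this problem by choosing different base examples whose dimension is controlled from the outset. For the first assertion (peak points only) it uses Basener's example \cite{Bas}: $K=E\times S^1$ with $E$ McKissick's Swiss cheese, so $K$ is visibly a nowhere dense subset of a solid torus in $\R^3$. For the strengthening with bounded relative units it uses the example of \cite[Theorem~3.6]{FeinsteinStronglyRegular}, observing that if one starts that construction from McKissick's cheese the resulting space $L$ has topological dimension~$1$; the Menger--N\"obeling embedding theorem then places $L$ in $\R^3$. With either of these as $(A,K)$, your steps (2) and (3) go through verbatim. The moral is that the dimension constraint is the whole difficulty, and it is handled by selecting an example built to be low-dimensional rather than by trying to bound the dimension of a Cole-type inverse limit after the fact.
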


Another theorem of Anderson and Izzo asserts that the peak point conjecture holds for uniform algebras generated by $C^1$-smooth functions on a compact manifold-with-boundary of dimension {\it two\/}.  It remains an open question whether the smoothness hypothesis can be dropped from that theorem.

For our final example, we modify so as to make essential, an example constructed by the second author \cite[Theorem~2.4]{I2009} in response to a question raised by Ronald Douglas in connection with his work on a conjecture of William Arveson in operator theory.  Before stating the result we recall some notions used in the statement.  

The
ball algebra $A(S)$ on the unit sphere $S\subset \C^n$ consists of the restrictions to
the sphere of the functions that are continuous on the closed unit
ball ${\overline B_n} \subset \C^n$ and holomorphic on the open unit
ball $B_n$. The action of the $n$-torus $T^n$ on $S$ is the map $T^n
\times S \rightarrow S$ given by $\bigl( (e^{i\theta_1}, \dots,
e^{i\theta_n}), (z_1, \dots, z_n) \bigr) \mapsto (e^{i\theta_1}z_1,
\dots, e^{i\theta_n}z_n)$. To say that an algebra $A$ on $S$ is
invariant under the action of the $n$-torus means that the function
$(z_1,\dots, z_n) \mapsto f(e^{i\theta_1}z_1, \dots,
e^{i\theta_n}z_n)$ is in $A$ for each point $(e^{i\theta_1}, \dots,
e^{i\theta_n})\in T^n$ whenever $f$ is in $A$.

\begin{theorem}\label{sphereexample}
There exists an essential, natural uniform algebra on the unit sphere $S$ in $\C^3$ that  contains the ball algebra and is invariant under the action of the 3-torus on $S$.
\end{theorem}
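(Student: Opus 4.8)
\emph{Proof strategy.} The plan is to combine the construction behind Theorem~\ref{maintheorem} with the uniform algebra on $S$ built in \cite[Theorem~2.4]{I2009}. Recall that the latter algebra --- call it $B$ --- is natural on $S$, contains the ball algebra $A(S)$, and is invariant under the $T^3$-action; the only property it lacks is essentiality, and the task is to graft that on without destroying the rest. Being $T^3$-invariant, its essential set $E$ is $T^3$-invariant, and inspection of the construction in \cite{I2009} shows that $E$ has empty interior, so $U := S\setminus E$ is a dense open $T^3$-invariant subset of $S$; it is over $U$ that $B$ is \emph{too large} and where constraints must be added.

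A direct application of Theorem~\ref{maintheorem} with $X=S$ is impossible, since $S\cong S^5$ admits no topological copy of itself as a nowhere dense subset of one of its open subsets. So I would run the construction equivariantly by descending to the orbit space. Let $\pi\colon S\to S/T^3$ be the quotient map and identify $S/T^3$ with the $2$-simplex $\Delta=\{(t_1,t_2,t_3):t_j\ge 0,\ \sum_j t_j=1\}$ via $(z_1,z_2,z_3)\mapsto(|z_1|^2,|z_2|^2,|z_3|^2)$; then $\pi(U)$ is a dense open subset of $\Delta$, and the $T^3$-invariant functions in $C(S)$ are precisely the pullbacks $g\circ\pi$, $g\in C(\Delta)$. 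A key structural feature of $B$ (either already present in, or easily arranged in, the construction of \cite{I2009}) is that $B$ is the closed subalgebra of $C(S)$ generated by $\{g\circ\pi : g\in B\cap C(\Delta)\}$ together with $z_1,z_2,z_3$. The plan is then: first use Theorem~\ref{maintheorem} with $X=\Delta$ --- taking for the seed algebra a nontrivial, natural, regular algebra $R(K_0)$ on a compact plane set $K_0$, exactly as in the proof of Theorem~\ref{discexample}, and choosing the sets $K_n\subset\pi(U)$ and the homeomorphisms it produces so that the resulting essential, natural algebra $A'$ on $\Delta$ contains $B\cap C(\Delta)$ --- and then take $\tA$ to be the closed subalgebra of $C(S)$ generated by $\{g\circ\pi : g\in A'\}\cup\{z_1,z_2,z_3\}$. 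By construction $\tA$ is $T^3$-invariant, contains $A(S)$, and contains $B$.

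It then remains to verify that $\tA$ is natural on $S$, that $\tA\ne C(S)$, and that $\tA$ is essential. That $\tA\ne C(S)$ is immediate: every element of $\tA$ has nonnegative Fourier coefficients in each variable $\arg z_j$ (this holds for the generators and is preserved under closure), so $\overline{z_1}\notin\tA$. Naturality I would obtain by a fibering argument over $\pi$: a nonzero character $\varphi$ of $\tA$ restricts to a character of the subalgebra $\{g\circ\pi : g\in A'\}$, which by naturality of $A'$ on $\Delta$ (clause~(i) of Theorem~\ref{maintheorem}) is evaluation at some $t\in\Delta$; restricting to the fibre $\pi^{-1}(t)$ then reduces the question to naturality on that fibre of a copy of $B$, and the naturality of $B$ on $S$ established in \cite{I2009} forces $\varphi$ to be evaluation at a point of $\pi^{-1}(t)\subset S$.

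The step I expect to be the main obstacle is essentiality of $\tA$. Essentiality of $A'$ on $\Delta$ yields, for each proper closed subset of $\Delta$, a continuous function on $\Delta$ vanishing on it but not in $A'$, and pulling back by $\pi$ handles every proper closed \emph{$T^3$-invariant} subset of $S$. For a general proper closed $F\subsetneq S$ one must work harder: pick $p\in S\setminus F$, and use that the $K_n\subset\pi(U)$ supplied by Theorem~\ref{maintheorem} have dense union and diameters tending to $0$, so that some $K_n$ lies inside $\pi(S\setminus F)$ and the torus-tube $\pi^{-1}(K_n)$ can be taken disjoint from $F$; over that tube one then has to produce a continuous function on $S$ that vanishes outside it --- hence on $F$ --- yet fails the defining condition of $\tA$ inherited from $A'$ on $K_n$. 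Making this work --- showing that the enlargement performed on the orbit-space base really does force an essential constraint upstairs, rather than being swallowed by the freedom available in the torus directions --- is the crux, and is precisely where the compatibility demanded in the choice of $K_0$, the $K_n$, and the homeomorphisms in the application of Theorem~\ref{maintheorem} must be exploited.
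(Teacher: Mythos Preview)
Your approach differs substantially from the paper's, and the naturality step contains a genuine gap that cannot be repaired within your framework.

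The fatal problem is that the algebra you propose, $\tA=\overline{\langle\, A'\circ\pi,\ z_1,z_2,z_3\,\rangle}$, is \emph{not} natural on $S$.  Every generator has, on each interior $T^3$-fibre $\pi^{-1}(t)$, a Fourier expansion supported in $\Z_{\ge 0}^3$, and this property passes to products and uniform limits.  Consequently the linear functional
\[
\varphi(f)=\int_{T^3} f\bigl(\sqrt{t_1}\,e^{i\theta_1},\sqrt{t_2}\,e^{i\theta_2},\sqrt{t_3}\,e^{i\theta_3}\bigr)\,d\theta
\]
is multiplicative on $\tA$ (the only cross-term in $\widehat{fg}_0=\sum_\alpha \hat f_\alpha \hat g_{-\alpha}$ that survives is $\alpha=0$), and it sends each $z_j$ to $0$.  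No point of $S$ has all coordinates zero, so $\varphi$ is not an evaluation.  Equivalently, $\tA|_{\pi^{-1}(t)}$ is the polydisc algebra on that torus, whose maximal ideal space is the full closed polydisc, not the torus.  Your fibering sketch tacitly assumes this fibre algebra is natural --- it is not --- and the appeal to ``a copy of $B$'' on the fibre is misplaced: $B$ is an algebra on $S$, not on a single fibre.

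The auxiliary claims you invoke to get $\tA\supset B$ also fail.  First, the asserted ``structural feature'' that $B$ is generated by $z_1,z_2,z_3$ together with its invariant part is false: the algebra of \cite{I2009} contains continuous extensions to $S$ of $z_j^{-1}$ from its essential tube, and such functions have negative Fourier modes, so they cannot lie in any algebra generated by $z_1,z_2,z_3$ and invariant functions.  Second, even the inclusion $A'\supset B\cap C(\Delta)$ cannot be arranged with $K_n\subset\pi(U)$: the invariant part of $B$ consists of all $g\in C(\Delta)$ with $g|_K\in R(K)$ (where $K$ is the Swiss cheese used in \cite{I2009}), and since your $K_n$ are disjoint from $K$ such $g$ are unrestricted on $K_n$; forcing $g|_{K_n}\in A_n$ for all such $g$ would require $A_n=C(K_n)$.

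The paper avoids all of this by \emph{not} taking a generated algebra.  It places a dense sequence of Swiss cheeses $K_n$ in the open quarter-disc $Q$ (identified with the interior of $\Delta$), forms the torus-tubes $X_n=\pi^{-1}(K_n)$, and on each $X_n$ uses the local algebra $B_n$ generated by $z_1,z_2,z_3,\,z_1^{-1},z_2^{-1},z_3^{-1}$ and $R(K_n)\circ\pi$.  The global algebra is then defined by restriction conditions,
\[
A=\{\,f\in C(S): f|_{X_n}\in B_n \text{ for all } n\,\},
\]
which is automatically $T^3$-invariant and contains $A(S)$.  Naturality comes from the antisymmetric decomposition: the invariant continuous functions on the quotient of $\overline Q$ by collapsing each $K_n$ force every set of antisymmetry to sit in some $X_n$ or be a singleton, and each $B_n$ is natural by \cite{I2009}.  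Essentiality follows because every nonempty $T^3$-invariant open set in $S$ contains some $X_m$, and $B_m\neq C(X_m)$.  The presence of the $z_j^{-1}$ in the local algebras is exactly what pins characters to the torus fibre and prevents the polydisc-interior characters that kill your construction.
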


The example in \cite{I2009} is generated by $C^\infty$-smooth functions and satisfies the conditions above except that it is not essential.  Note that since every point of the sphere is a peak point for the ball algebra, Theorem~\ref{andersonizzo} shows that it is not possible for the algebra to be simultaneously both essential and generated by smooth functions.

Although the uniform algebras we construct are essential, they do include many non-constant, real-valued functions, and so they are not antisymmetric. This leaves open the question of whether or not there are antisymmetric uniform algebras with these properties.
For example, we do not know whether there exists an antisymmetric, natural, regular uniform algebra on the closed unit disc.

Theorem~1.1 is proved in Section~3.  In Section~4, we prove Theorems~1.2, 1.5, and~1.6.  Theorem~1.3 is proved in Section~5.

\section{Preliminaries}~\label{term} ~\label{prelim}
{Throughout the paper, by a \textit{compact space} we shall mean a non-empty, compact, Hausdorff topological space; by a \textit{compact plane set} we shall mean a {non-empty}, compact subset of the complex plane.}

We assume that the reader has some familiarity with uniform algebras. For further background we refer the reader to \cite{browder,Ga,St-book}. The reader may also consult \cite{Dales,Kaniuth} for the general theory of commutative Banach algebras, and more details concerning regularity conditions and their applications.

\begin{definition}
Let $A$  be a uniform algebra on a compact space $X$.
Recall that we say that $A$ is natural (on $X$) if the only non-zero multiplicative linear functionals on $A$ are given by evaluations at the points of $X$, in which case $X$ may be identified with the maximal ideal space of $A$.
We say that $A$ is \emph{regular on} $X$ if, for each closed subset $E \subseteq X$ and each $x \in X \setminus E$, there is a function $f \in A$ with $f(x)=1$ and $f(E) \subseteq \{0\}$; the algebra $A$ is \emph{normal on} $X$ if, for each pair of disjoint closed subsets $E$ and $F$ of $X$, there is a function $f \in A$ with $f(E) \subseteq \{0\}$ and $f(F)\subseteq \{1\}$.
We say that $A$ is \emph{regular} if it is natural and regular on $X$; $A$ is \emph{normal} if it is natural and normal on $X$.
\end{definition}

It is standard that the uniform algebra $A$ on $X$ is normal if and only it is natural and regular on $X$ \cite[Theorem 27.2]{St-book}, and that this holds if and only if $A$ is normal on $X$ \cite[Theorem 27.3]{St-book}.

The following elementary result was implicitly assumed throughout \cite{FeinsteinSomerset2000} (in the more general setting of Banach function algebras).
The proof involves an elementary compactness argument: the details are given in \cite[Proposition 2.1]{2012FeinsteinMortini}.

\begin{lemma}\label{reg}
A uniform algebra $A$ on $X$ is regular on $X$ if and only if for every pair of distinct points $x_0$ and $x_1$ in $X$ there is a function $f$ in $A$ such that $f$ is zero on a neighborhood of $x_0$ and $f(x_1)=1$.
\end{lemma}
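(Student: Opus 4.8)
The plan is to prove the two implications of the equivalence separately. The forward implication is essentially immediate from the definition of regularity on $X$, so the substance of the lemma lies in the reverse implication, where a routine compactness argument is needed.

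For the forward direction, I would argue as follows. Suppose $A$ is regular on $X$, and let $x_0$ and $x_1$ be distinct points of $X$. Since $X$ is Hausdorff, choose disjoint open sets $U_0 \ni x_0$ and $U_1 \ni x_1$; then $E := X \setminus U_1$ is a closed set containing the neighbourhood $U_0$ of $x_0$ and not containing $x_1$. Applying the definition of regularity on $X$ to the closed set $E$ and the point $x_1 \in X \setminus E$ yields a function $f \in A$ with $f(x_1) = 1$ and $f(E) \subseteq \{0\}$; since $E$ is a neighbourhood of $x_0$, this $f$ has the required property.

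For the reverse direction, assume the stated separation property. Let $E \subseteq X$ be closed and let $x \in X \setminus E$; I must produce $f \in A$ with $f(x) = 1$ and $f(E) \subseteq \{0\}$. If $E$ is empty this is trivial (take $f$ constant), so assume $E \neq \emptyset$. For each $y \in E$ (necessarily $y \neq x$), the hypothesis applied to the pair $y$, $x$ supplies a function $g_y \in A$ that vanishes on some open neighbourhood $N_y$ of $y$ and satisfies $g_y(x) = 1$. The family $\{N_y : y \in E\}$ is an open cover of $E$, which is compact as a closed subset of the compact space $X$, so there are finitely many points $y_1, \dots, y_n \in E$ with $E \subseteq N_{y_1} \cup \dots \cup N_{y_n}$. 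I would then take $f = g_{y_1} g_{y_2} \cdots g_{y_n}$, which lies in $A$ because $A$ is an algebra. At every point of $N_{y_1} \cup \dots \cup N_{y_n}$, and in particular at every point of $E$, at least one factor vanishes, so $f(E) \subseteq \{0\}$ (indeed $f$ vanishes on a whole neighbourhood of $E$); meanwhile $f(x) = \prod_{i=1}^{n} g_{y_i}(x) = 1$. Hence $A$ is regular on $X$.

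I do not expect a genuine obstacle here: the argument is short once the right objects are chosen. The only points that merit a moment's care are, in the forward direction, passing from the definition (which yields a function vanishing on a closed set) to a function vanishing on an \emph{open} neighbourhood, which is handled by arranging the closed set $E$ above to be a neighbourhood of $x_0$; and, in the reverse direction, exploiting that each $g_y$ vanishes on an open set rather than merely at the single point $y$, which is exactly what makes the finite product vanish on all of $E$.
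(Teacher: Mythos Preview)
Your proof is correct and is precisely the ``elementary compactness argument'' that the paper alludes to (citing \cite[Proposition~2.1]{2012FeinsteinMortini} for the details rather than writing them out). Both the forward direction via a closed neighbourhood of $x_0$ and the reverse direction via a finite product of the $g_{y_i}$ are exactly what is intended.
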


The next lemma is proved using a similar elementary compactness argument: we leave the details to the reader.
\begin{lemma}\label{normal}
A uniform algebra $A$ on $X$ is normal (on $X$) if and only if for every pair of distinct points $x_0$ and $x_1$ in $X$ there is a function $f$ in $A$ such that $f$ is zero on a neighborhood of $x_0$ and one on a neighborhood of $x_1$.
\end{lemma}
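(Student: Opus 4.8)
The plan is to prove the two implications separately. The forward implication is purely topological, while the reverse one is the promised compactness argument, structured like the proof of Lemma~\ref{reg} but carried out in two stages rather than one.

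For the ``only if'' direction, suppose $A$ is normal on $X$, and let $x_0 \neq x_1$ in $X$ be given. Since $X$ is compact Hausdorff it is a regular (indeed normal) topological space, so one may choose open sets $V_0 \ni x_0$ and $V_1 \ni x_1$ whose closures $\overline{V_0}$ and $\overline{V_1}$ are disjoint. Applying the normality of $A$ to the disjoint closed sets $\overline{V_0}$ and $\overline{V_1}$ yields $f \in A$ with $f(\overline{V_0}) \subseteq \{0\}$ and $f(\overline{V_1}) \subseteq \{1\}$; this $f$ vanishes on the neighbourhood $V_0$ of $x_0$ and equals $1$ on the neighbourhood $V_1$ of $x_1$, as required.

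For the ``if'' direction, assume the stated pointwise condition, and let $E$ and $F$ be disjoint closed subsets of $X$; the goal is to produce $f \in A$ with $f(E) \subseteq \{0\}$ and $f(F) \subseteq \{1\}$. First I would fix a point $x_1 \in F$: for each $x_0 \in E$ the hypothesis supplies $f_{x_0} \in A$ vanishing on an open neighbourhood $U_{x_0}$ of $x_0$ and equal to $1$ on an open neighbourhood $W_{x_0}$ of $x_1$. Since the sets $U_{x_0}$ cover the compact set $E$, finitely many of them, say those indexed by $x_0^{(1)}, \dots, x_0^{(m)}$, already cover $E$; then $g_{x_1} := f_{x_0^{(1)}} \cdots f_{x_0^{(m)}}$ lies in $A$, vanishes on the neighbourhood $U_{x_0^{(1)}} \cup \dots \cup U_{x_0^{(m)}}$ of $E$, and equals $1$ on the open neighbourhood $V_{x_1} := W_{x_0^{(1)}} \cap \dots \cap W_{x_0^{(m)}}$ of $x_1$. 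Now, letting $x_1$ range over $F$, the sets $V_{x_1}$ cover the compact set $F$, so finitely many, say $V_{x_1^{(1)}}, \dots, V_{x_1^{(k)}}$, already cover $F$; the function $f := 1 - \prod_{i=1}^{k}\bigl(1 - g_{x_1^{(i)}}\bigr)$ lies in $A$, vanishes on the neighbourhood of $E$ on which all the $g_{x_1^{(i)}}$ vanish (there every factor equals $1$), and takes the value $1$ at each point of $F$ (where some $g_{x_1^{(i)}}$ equals $1$, killing a factor). This is precisely the normality of $A$ on $X$.

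There is no serious obstacle here; the points requiring care are the bookkeeping of the nested quantifiers and the decision to handle $E$ and $F$ in two separate compactness steps rather than one --- the hypothesis controls each auxiliary function only near a single pair of points, so one compactness step can enlarge only one of the two sets at a time --- together with the observation that the auxiliary functions take the values $0$ and $1$ exactly, so that the products remain in $A$ and no closure or approximation argument is needed.
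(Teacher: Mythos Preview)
Your proof is correct and follows precisely the ``similar elementary compactness argument'' that the paper indicates but leaves to the reader: the forward direction uses topological normality of $X$ to shrink the closed sets to disjoint closed neighbourhoods of the two points, and the reverse direction is the expected two-stage compactness argument, first multiplying to pass from a point of $E$ to all of $E$, then using the $1-\prod(1-g_i)$ trick to pass from a point of $F$ to all of $F$. This is exactly the intended argument.
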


We now introduce some important ideals, and recall some stronger regularity conditions.
\begin{definition}
Let $A$  be a uniform algebra on a compact space $X$.
Let $x \in X$. We define the ideals $M_x$, $J_x$ in $A$ as follows:
\begin{eqnarray*}
 M_x&=&\{f\in A:f(x)=0\}\,;\\
J_x&=&\left\{f\in A: f^{-1}(\{0\}) \textrm{ is a neighborhood of } x\right\}.
\end{eqnarray*}
We say that $A$ is {\it strongly regular at\/} $x$ if
$J_x$ is dense in $M_x$;
$A$ satisfies
\emph{Ditkin's condition at} $x$ if, for every $f\in M_x$ and every $\eps > 0$, there is $g\in J_x$ with $\normX{gf-f}<\eps$; for $C \geq 1$, $A$
has \emph{bounded relative units at} $x$ \emph{with bound} $C$ if,
for every compact set $F\subseteq X\setminus\{x\}$, there is $f\in J_x$ with
$\normX{f}\le C$, such that $f(F)\subseteq \{1\}.$
We say that $A$ \emph{has bounded relative units at} $x$ if there exists $C \geq 1$ such that $A$ has bounded relative units at $x$ with bound $C$.
The algebra $A$ is {\it strongly regular} if
it is strongly regular at
every point of $X$;
$A$ is a \emph{Ditkin algebra} if it satisfies Ditkin's condition at each $x\in X$; $A$ is a
\emph{strong Ditkin algebra} if it is strongly regular and, for every $x \in X$, the ideal $M_x$ has a bounded approximate identity; $A$ has \emph{bounded
relative units} if it has bounded relative units at each $x\in X$.
\end{definition}

The reader may find a short survey of the relationships between these regularity conditions for uniform algebras in \cite{feinheath2007}. In particular we note the following.
Let $A$  be a uniform algebra on a compact space $X$, and let $x \in X$. 
If $A$ has bounded relative units at $x$, then $A$ satisfies Ditkin's condition at $x$, $A$ is strongly regular at $x$, $M_x$  has a bounded approximate identity and, for all $C>1$, $A$ has bounded relative units at $x$ with bound $C$. Thus if $A$ has bounded relative units, then every $C>1$ serves as a global bound. (For general Banach function algebras the bound may genuinely depend on the point.) 
The uniform algebra $A$ has bounded relative units if and only if it is a strong Ditkin algebra and this implies that $A$ is a Ditkin algebra; if $A$ is a Ditkin algebra then $A$ is strongly regular; if $A$ is strongly regular then $A$ is natural and regular on $X$ and hence normal.

Now suppose that $X$ is metrizable. It is then standard that $M_x$ has a bounded approximate identity if and only if $x$ is a peak point for $A$. 
(See, for example, \cite[p.~101]{browder}, or \cite[Theorem 4.3.5]{Dales}, and note that strong boundary points coincide with peak points when $X$ is metrizable.)
Thus, if $A$ has bounded relative units at $x$, then $x$ must be a peak point for $A$. 
In this setting, $A$ has bounded relative units if and only if $A$ is strongly regular and every point of $X$ is a peak point.

Examples of non-trivial, strongly regular uniform algebras on compact metrizable spaces were given in \cite{FeinsteinStronglyRegular}, including some examples with bounded relative units.

We shall need some results about metrizability. The first of these is essentially \cite[Corollary 26.16]{Jameson}. We include a short proof for the reader's convenience.
\begin{lemma}\label{metrizable}
Let $X$
be a compact metrizable space and $Y$ be a quotient space of $X$.  If $Y$ is Hausdorff, then $Y$ is metrizable.
\end{lemma}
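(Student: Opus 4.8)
The plan is to reduce the metrizability of $Y$ to the separability of $C(Y)$, and then to realise $Y$ as a subspace of a countable product of closed discs.

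First I would observe that $Y$ is compact and Hausdorff: it is Hausdorff by hypothesis, and it is compact since it is the continuous image of the compact space $X$ under the quotient map $q\colon X\to Y$. Since $X$ is compact metrizable, $C(X)$ is separable (for example, by the Stone--Weierstrass theorem applied to the subalgebra generated by the constants together with a countable dense subset of $X$). Because $q$ is surjective, the map $q^*\colon C(Y)\to C(X)$, $f\mapsto f\circ q$, satisfies $\|f\circ q\|_X=\|f\|_Y$ for all $f\in C(Y)$, so it is an isometric linear embedding; hence $C(Y)$, being isometric to a subset of the separable metric space $C(X)$, is itself separable.

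Next I would fix a countable dense subset $\{f_n:n\in\N\}$ of $C(Y)$. Since $Y$ is compact Hausdorff it is normal, so by Urysohn's lemma $C(Y)$ separates the points of $Y$, and a routine approximation argument shows that the dense family $\{f_n\}$ already separates the points of $Y$. Therefore the map $\Phi\colon Y\to\prod_{n=1}^\infty\{z\in\C:|z|\le\|f_n\|_Y\}$ defined by $\Phi(y)=\bigl(f_n(y)\bigr)_{n=1}^\infty$ is a continuous injection from a compact space into a Hausdorff space, and hence a homeomorphism onto its image. The target is a countable product of compact metric spaces, so it is compact and metrizable; thus $Y$ is homeomorphic to a subspace of a metrizable space and is therefore metrizable.

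The argument is entirely routine and I do not anticipate a real obstacle. The only points calling for a word of care are the verification that $q^*$ is isometric --- which is precisely where surjectivity of $q$ enters --- and the observation that a countable dense subfamily of $C(Y)$ still separates the points of $Y$; both are immediate.
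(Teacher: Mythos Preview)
Your proof is correct and follows essentially the same approach as the paper: both show that $q^*$ isometrically embeds $C(Y)$ into the separable space $C(X)$, so $C(Y)$ is separable. The only difference is that the paper then simply invokes the well-known equivalence ``a compact Hausdorff space $Z$ is metrizable if and only if $C(Z)$ is separable'' and stops, whereas you spell out the relevant direction of that equivalence by embedding $Y$ into a countable product of discs.
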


\begin{proof}
It is well known that a compact Hausdorff space $Z$ is metrizable if and only if $C(Z)$ is separable.
Let $q:X\rightarrow Y$ be a quotient map of $X$ onto $Y$.  The map $q^*:C(Y)\rightarrow C(X)$ given by $q^*(f)=f\circ q$ embeds $C(Y)$ isometrically into $C(X)$.  Thus the hypotheses of the lemma give that $C(Y)$ is isometric to a subspace of a separable metric space and hence is separable.  Consequently, $Y$ is metrizable.
\end{proof}

Let $(X,d)$ be a metric space, let $x \in X$, and let $r>0$. We denote by $B_r(x)$ the set 
$\{y \in X: d(y,x)<r\}$, which is called \emph{the open ball of radius $r$ about $x$}.

\begin{lemma}\label{quotient}
Let $(X,d)$ be a compact metric space, let $(K_n)$ be a sequence of pairwise disjoint, closed subsets of $X$
whose diameters go to zero, and let $Y$ be the quotient space obtained from $X$ by identifying each $K_n$ to a point.  Then $Y$ is metrizable.
\end{lemma}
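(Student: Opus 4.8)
The plan is to deduce the result from Lemma~\ref{metrizable}. Since $X$ is compact metrizable and $Y$ is a quotient of $X$, by that lemma it is enough to show that $Y$ is Hausdorff. Write $q\colon X\to Y$ for the quotient map and let
\[
R=\{(x,x')\in X\times X : q(x)=q(x')\}
\]
be the corresponding equivalence relation on $X$. I would invoke the standard fact that the quotient of a compact Hausdorff space by a \emph{closed} equivalence relation is Hausdorff; if a self-contained argument is preferred, one first notes that $q$ is a closed map --- the saturation of a closed set $C\subseteq X$ is the first-coordinate projection of the closed, hence compact, set $R\cap(X\times C)$, and is therefore closed --- and then separates two distinct points of $Y$ by applying normality of $X$ to their (disjoint, closed) fibres and pushing the resulting disjoint open sets forward along the closed map $q$. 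Either way, the whole matter reduces to proving that $R$ is closed in the compact metric space $X\times X$.

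This is where the hypotheses on the $K_n$ come in. The fibres of $q$ are the sets $K_n$ together with the singletons $\{x\}$ for $x\notin\bigcup_n K_n$, so
\[
R=\Delta_X\cup\bigcup_{n=1}^{\infty}(K_n\times K_n),
\]
with $\Delta_X$ the diagonal. To see that this set is closed, let $(x_j,x_j')\to(x,x')$ with each $(x_j,x_j')\in R$. If infinitely many of the pairs lie on $\Delta_X$, then $(x,x')\in\Delta_X\subseteq R$. Otherwise discard those terms and write $(x_j,x_j')\in K_{n_j}\times K_{n_j}$ with $x_j\neq x_j'$. If some index $m$ is repeated infinitely often, pass to that subsequence; closedness of $K_m$ gives $x,x'\in K_m$, so $(x,x')\in R$. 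If no index repeats infinitely often, then $n_j\to\infty$, whence $d(x_j,x_j')\le{\rm diam}(K_{n_j})\to 0$, so $x=x'$ and $(x,x')\in\Delta_X\subseteq R$. In all cases $(x,x')\in R$, so $R$ is closed.

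Putting the two steps together, $Y$ is Hausdorff, and Lemma~\ref{metrizable} then yields that $Y$ is metrizable. The argument is essentially formal; the only substantive point is the case analysis showing $R$ is closed, and the sole place the hypothesis ${\rm diam}(K_n)\to 0$ is needed is the subcase in which the indices $n_j$ escape to infinity. The mildest of obstacles, should one wish to avoid citing the general fact about closed equivalence relations, is writing out the point-separation argument sketched above, but that is routine once one knows $q$ is a closed map.
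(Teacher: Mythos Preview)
Your proof is correct and takes a genuinely different route from the paper's. Both arguments begin by invoking Lemma~\ref{metrizable} to reduce the question to showing that $Y$ is Hausdorff, but from there they diverge. The paper argues directly: given distinct $a,b\in Y$, it chooses disjoint open neighbourhoods $U_a,U_b$ of the fibres in $X$, and then deletes from $U_a$ every $K_n$ that meets $U_a$ without being contained in it (and similarly for $U_b$). The resulting sets $V_a,V_b$ are saturated by construction, and the hypothesis ${\rm diam}(K_n)\to 0$ is used to show that near any given point only finitely many $K_n$ have been removed, so that $V_a$ and $V_b$ remain open. You instead show that the graph $R$ of the equivalence relation is closed in $X\times X$ and invoke (with a sketch) the general fact that a quotient of a compact Hausdorff space by a closed equivalence relation is Hausdorff. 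Your sequential case analysis for the closedness of $R$ is clean and isolates exactly where ${\rm diam}(K_n)\to 0$ enters. The trade-off is that the paper's argument is entirely self-contained, while yours packages the point-set topology into a reusable lemma; both are short.

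One small caveat: your description of the fibres of $q$, and hence the formula $R=\Delta_X\cup\bigcup_n(K_n\times K_n)$, tacitly assumes that the $K_n$ are pairwise disjoint; if they overlap, the true equivalence relation is the transitive closure of what you wrote and need not be closed. In fact the lemma as stated is false without disjointness (take $X=[0,1]$ and $K_n=[1/(n+1),1/n]$; the quotient is the two-point Sierpi\'nski space). The paper's own proof carries the same hidden assumption when it asserts that $q^{-1}(\{a\})$ is closed, and in any case the lemma is only ever applied in the paper to pairwise disjoint families, so this is not a defect of your argument relative to the paper's.
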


\begin{proof}
By Lemma~\ref{metrizable}, it suffices to show that $Y$ is Hausdorff.
Denote the quotient map of $X$ onto $Y$ by $q$.  Let $a$ and $b$ be distinct points of $Y$.  Then $\qa$ and $\qb$ are disjoint closed sets in $X$.  Choose disjoint neighborhoods $U_a$ and $U_b$ of $\qa$ and $\qb$, respectively.  Define $\scrK_a$ to be the collection of those $K_n$ that intersect $U_a$ but are not contained in $U_a$ and define $\scrK_b$ in the same way but with $U_a$ replaced by $U_b$.  Set
$$V_a=U_a\setminus (\bigcup_{K\in \scrK_a} K) \quad {\rm and}\quad
V_b=U_b\setminus (\bigcup_{K\in \scrK_b} K).$$
Then $V_a$ and $V_b$ are disjoint sets containing $\qa$ and $\qb$, and each of $V_a$ and $V_b$ is saturated (i.e., $q^{-1}(q(V_a))=V_a$ and $q^{-1}(q(V_b))=V_b$).  Thus to complete the proof, it suffices to show that $V_a$ and $V_b$ are open.  Let $x\in V_a$ be arbitrary.  Since $U_a$ is open, there is an $r>0$ such that the open ball $B_r(a)$ of radius $r$ about $a$ is contained in $U_a$.  Since the diameters of the $K_n$ go to zero, there are at most finitely many $K_n$, say $K_{n_1},\ldots, K_{n_k}$, that intersect $B_{r/2}(a)$ and are not contained in $U_a$.  Then $B_{r/2}(a)\setminus (K_{n_1}\cup\cdots\cup K_{n_k})$ is an open set about $a$ contained in $V_a$.  Thus $V_a$ is open.  The same argument applies to $V_b$.
\end{proof}

\section{Proof of Theorem~\ref{maintheorem}}

\begin{proof}
[Proof of Theorem \ref{maintheorem}]
Since $X$ is a compact metric space, there is a countable dense subset $\{a_n\}_{n=1}^\infty$ in $X$.  Set $n_1=1$ and $r_1=1$.  By hypothesis there is a nowhere dense subspace $K_1$ of the ball $B_1(a_1)$ homeomorphic to $K$.  Let $n_2$ be the smallest positive integer such that $n_1<n_2$ and $a_{n_2}$ is not in $K_1$, choose $r_2$ such that $0<r_2<1/2$ and the ball $B_{r_2}(a_{n_2})$ is disjoint from $K_1$, and then choose a nowhere dense subspace $K_2$ of $B_{r_2}(a_{n_2})$ homeomorphic to $K$.  Now suppose we have chosen positive integers $n_1<n_2< \cdots < n_m$, positive numbers $r_1,\ldots, r_m$, and nowhere dense subspaces $K_1,\ldots, K_m$ of $X$.  By the Baire category theorem, the union $K_1\cup\cdots\cup K_m$ is nowhere dense in $X$.  Thus infinitely many points of  $\{a_n\}_{n=1}^\infty$ lie in the complement of $K_1\cup\cdots\cup K_m$.    Let $n_{m+1}$ be the smallest positive integer such that $n_m< n_{m+1}$ and $a_{n_{m+1}}$  is not in $K_1\cup\cdots\cup K_m$, choose $r_{m+1}$ such that $0<r_{m+1}<1/(m+1)$ and such that the ball $B_{r_{m+1}}(a_{n_{m+1}})$ is disjoint from  $K_1\cup\cdots\cup K_m$, and then choose a nowhere dense subspace $K_{m+1}$ of $B_{r_{m+1}}(a_{n_{m+1}})$ homeomorphic to $K$.  By induction, we then obtain a sequence $\{K_n\}_{n=1}^\infty$ as in the third sentence of the theorem.

Now suppose that homeomorphisms $h_n:K_n \rightarrow K$ are given, and define $A_n$ and $\tilde A$ as in the statement of the theorem.

We shall frequently use certain quotient spaces of $X$ in the remainder of the proof. Let $Y$ be the quotient space obtained from $X$ by
identifying each $K_n$ to a point, and let $q$ be the quotient map of $X$ onto $Y$.
Also, for each positive integer $m$, let $Y_m$ be the quotient space obtained from $X$ by identifying each $K_n$ with $n\neq m$ to a point, and let $q_m$ be the quotient map of $X$
onto $Y_m$. By Lemma~\ref{quotient}, $Y$ and all of the $Y_m$ are metrizable.

We now establish the equality $\tA|K_m=A_m$ for all $m$.  Fix a positive integer $m$.
We may identify $K_m$ with the subset $q_m(K_m)$ of $Y_m$.
Given an arbitrary function $g$ that belongs to $A_m$, apply the Tietze extension theorem to extend $g$ to a continuous complex-valued function $f$ on $Y_m$.  Then the function $f\circ q_m$ is  obviously in $\tilde A$ and $(f\circ q_m)|K_m=g$.

We next show that
$\tilde A$ is a uniform algebra.  All conditions are obvious except that $\tilde A$ separates points. For that let $a$ and $b$ be two distinct points of $X$.  If there is
a positive integer $m$ such that both $a$ and $b$ belong to $K_m$, then the equality $\tA|K_m=A_m$ gives at once a function in $\tA$ that separates $a$ and $b$.  Otherwise
we may use the quotient space $Y$ defined above: since in this case $q(a)\neq q(b)$, there is a continuous real-valued function $f$ on $Y$ that separates $q(a)$ and $q(b)$.  The function $f\circ q$ is then obviously in $\tilde A$ and separates $a$ from $b$.

To show that $\tilde A$ is essential, first note that by the Baire category theorem, the union of the $K_n$ has empty interior in $X$.  Thus letting $U$ be an arbitrary nonempty open subset of $X$, we know that there is a point $a$ in $U$ lying in none of the $K_n$.  Now choose $r>0$ such that the ball $B_{2r}(a)$ is contained in $U$.  Note that there must be infinitely many $K_n$ that intersect the ball $B_r(a)$.  Because ${\rm diam} (K_n)\rightarrow 0$, it must then be that for some positive integer $m$, the set $K_m$ is contained in $U$.  Because the restriction of each function in $\tilde A$ to $K_m$ lies in $A_m$ and $A$ is a nontrivial uniform algebra, it follows that $U$ must intersect the essential set for $\tilde A$. Consequently, $\tilde A$ is essential.

\def\jcsobs{our observation about closed sets}

At this point it is convenient to make an observation concerning the separation of certain pairs of closed sets, as this will help at a number of points in the remainder of the proof. Let $Y$ and $q$ be as above.
We claim that, for each pair of closed subsets $E$ and $F$ of $X$ such that
$q(E) \cap q(F) = \emptyset$,
there is a real-valued function $f \in \tA$ with $\normX{f}=1$ such that $f\equiv 0$ on some neighborhood of $E$ and $f\equiv 1$ on some neighborhood of $F$.
We shall refer to this below as \emph{\jcsobs}.
Note here that $q(E) \cap q(F) = \emptyset$ if and only if $E \cap F = \emptyset$ and there is no positive integer $m$ such that $E$ and $F$  both meet $K_m$.

To prove our claim, let $E$ and $F$ be such a pair of closed subsets of $X$.
Since $q(E)$ and $q(F)$ are disjoint closed subsets of $Y$, and $Y$ is normal, we may choose, in $Y$, a pair of disjoint closed neighborhoods $N_E$, $N_F$  of $q(E)$, $q(F)$ respectively. By Urysohn's lemma, there is a continuous real-valued function $g\in C(Y)$ with $\|g\|_Y=1$ and such that $g(N_E)\subseteq\{0\}$ and $g(N_F)) \subseteq \{1\}$. Set $f=g\circ q$. Then $f \in \tA$,
$\normX{f}=1$, $f\equiv 0$ on the neighborhood $q^{-1}(N_E)$ of $E$, and $f\equiv 1$ on the neighborhood $q^{-1}(N_F)$ of $F$. Thus $f$ has the desired properties.

We now establish the properties of the points of $X \setminus \bigcup_{n=1}^\infty K_n$ from the end of the statement of the theorem.
Let $x \in X \setminus \bigcup_{n=1}^\infty K_n$. The fact that $\tA$ has bounded 
relative units at $x$ with bound $1$ is immediate from \jcsobs, taking $E=\{x\}$
and considering an arbitrary compact subset
$F$ of $X \setminus \{x\}$.
It follows that $\tA$ satisfies Ditkin's condition at $x$, $\tA$ is strongly regular at $x$, and $x$ is a peak point for $\tA$. (The fact that $x$ is a peak point may also be proved
directly.)

We now turn to establishing the relations between the properties of $\tA$ and those of $A$.

{(i) It is well known that given a uniform algebra $B$ on a compact space $\Sigma$ and a closed subspace $E$ of $\Sigma$, the maximal ideal space of the uniform algebra $\overline{B|E}$ can be identified with a subspace of the maximal ideal space of $B$.  (This subspace is the {\it $B$-convex hull\/} of $E$.)  Furthermore, under this identification, the maximal ideal space of $B$ is the disjoint union of the maximal ideal spaces of its restrictions to its maximal sets of antisymmetry.  (See the remark after \cite[Theorem~II.13.2]{Ga}.)  It follows that a uniform algebra is natural if and only if each of its restrictions to a maximal set of antisymmetry is natural.}

Now let $Y$ be the quotient space defined above.   The continuous real-valued functions separate points on $Y$, and each of these functions induces a function on $X$ belonging to $\tA$.
It follows that each set of antisymmetry for $\tA$ is either contained in some $K_n$ or is a singleton.  Furthermore, it then follows from the equality $\tA|K_n=A_n$ that the sets of antisymmetry for $\tA$ contained in $K_n$ are precisely the sets of antisymmetry for $A_n$.  Consequently, the collection of maximal sets of antisymmetry for $\tA$ consists precisely of the maximal sets of antisymmetry for the $A_n$ and the singletons lying in no $K_n$.

{That $\tA$ is natural if and only if $A$ is natural follows at once from the conclusions of the preceding two paragraphs.}

(ii) The proof of this is similar to the proof of (iii) so we leave it as an exercise for the reader.

(iii) Suppose $A$ is normal.  Let $x_0$ and $x_1$ be distinct points of $X$.  We seek a function in $\tA$ that is zero on a neighborhood of $x_0$ and one on a neighborhood of $x_1$.  The case when $x_0$ and $x_1$ do not lie in a common $K_n$ is
easy, as the sets $E=\{x_0\}$ and $F=\{x_1\}$ then satisfy the conditions for \jcsobs.

 Assume now that for some $m$, both $x_0$ and $x_1$ belong to $K_m$.  Let $Y_m$ and $q_m$ be the  quotient space and quotient map defined above.
 Since by hypothesis $A$ is normal, there is a function $h$ in $A_m$ such that $h$ is zero on a neighborhood of $x_0$ in $K_m$ and one on a neighborhood of $x_1$ in $K_m$.  Now regard $K_m$ as a subspace of $Y_m$, and choose disjoint neighborhoods $V_0$ and $V_1$ of $x_0$ and $x_1$, respectively, in $Y_m$, such that $h=0$ on $V_0\cap K_m$ and $h=1$ on $V_1\cap K_m$.  Then choose neighborhoods $W_0$ and $W_1$ of $x_0$ and $x_1$ in $Y_m$ with $\overline W_0\subset V_0$ and $\overline W_1 \subset V_1$.  Now define $g$ on $K_m\cup \overline W_0 \cup \overline W_1$ by
$$g(x)=\begin{cases}&h(x)\ \mbox{for}\ x\in K_m\\
&0\ \mbox{for}\  x \in \overline W_0\\
&1\ \mbox{for}\  x \in \overline W_1\\
\end{cases}\\$$
Then $g$ is well-defined and continuous.  Apply the Tietze extension theorem to extend $g$ to a continuous function $h$ on $Y_m$.  Then the function $f=h\circ q_m$ is the function we seek.

That normality of $\tA$ implies normality of $A$ is trivial.

(iv) Suppose every point of $K$ is a peak point for $A$.  We are to show that an arbitrary point $x$ in $X$ is a peak point for $\tA$.  When $x$ lies in no $K_n$, we already know that $x$ is a peak point for $\tA$,
as noted earlier.

Consider now the case when $x$ belongs to $K_m$.  Let $Y_m$ and $q_m$ be as above.
  By hypothesis there is a function $g$ in $A_m$ that peaks at $x$.    By the Tietze extension theorem, $g$ extends to a continuous complex-valued function $h$ on $Y_m$ with supremum norm 1.  Let $\rho$ be a nonnegative continuous function on $Y_m$ that is identically equal to 1 on $K_m$ and strictly less than 1 everywhere else on $Y_m$.  Then 
   the function $(\rho h) \circ q_m$ belongs to $\tA$ and peaks at $x$.

That every point of $K$ is a peak point for $A$ if every point of $X$ is a peak point for $\tA$ is trivial.

(v)
As noted in Section \ref{prelim}, a uniform algebra on a compact metric space has bounded relative units if and only if it is strongly regular and every point is a peak point. Thus (v) may be deduced from (iv) and (vi). However, we include a direct proof.

As the property of having bounded relative units passes to restriction algebras, it follows that if $\tA$ has bounded relative units, then so does $A$.
Now suppose that $A$ has bounded relative units. We show that $\tA$ has bounded relative units. We already know that $\tA$ has bounded relative units at every point of $X \setminus \bigcup_{n=1}^\infty K_n$. Now let $m \in \N$, and let $x \in K_m$. Then $A_m$ has bounded relative units at $x$, with bound $C$, say. We show that $\tA$ also has bounded relative units at $x$ with bound $C$.

Let $E$ be a compact subset of $X \setminus \{x\}$, and set $F=E \cap K_m$ (which may be empty). Then there is a $g \in A_m$ with $||g||_{K_m} \leq C$ such that $g$ is constantly $0$ on some relatively open neighborhood $U$ of $x$ in $K_m$, and $g$ is constantly $1$ on $F$.  Let $Y_m$ and $q_m$ be as above. 
As before, we may identify $K_m$ with the subset $q_m(K_m)$ of $Y_m$, and then we have $x \in Y_m \setminus q_m(E)$.
Let $N$ be a compact neighborhood of $x$ in $Y_m\setminus q_m(E)$ such that $N \cap K_m \subseteq U$. Then there is an obvious continuous extension $h$ of $g$ to $K_m \cup N \cup q_m(E)$ such that $h \equiv 0$ on $N$ and $h \equiv 1$ on $q_m(E)$, and this extension $h$ still has uniform norm at most $C$. We may then apply the Tietze extension theorem to extend $h$ to all of $Y_m$ with the same norm: we also call this extension $h$. Set $f=h\circ q_m$. Then $f \in \tA$, $||f||_X \leq C$, $f$ is constantly $0$ on the neighborhood $q^{-1}(N)$ of $x$, and $f$ is constantly $1$ on $E$. This shows that $\tA$ has bounded relative units at $x$ with bound $C$.

(vi) As in (v), only one implication requires a proof, as the other is trivial.

Suppose that $A$ is strongly regular.
We already know that $\tA$ is strongly regular at every point of $X \setminus \bigcup_{n=1}^\infty K_n$.
Now let $m \in \N$, and let $x \in K_m$. Then $A_m$ is strongly regular at $x$. We show that $\tA$ is also strongly regular at $x$.
Let $f \in \tA$ with $f(x)=0$, and let $\eps>0$. Then there exists $g \in A_m$ with $g$ constantly $0$ on a relatively open neighborhood $U$ of $x$ in $K_m$ and such that
$|| f|_{K_m} - g||_{K_m} < \eps$.
Now let $Y_m$ and $q_m$ be as above. Regarding $K_m$ as a subset of $Y_m$, we may use the Tietze extension theorem as before to extend $g$ to a function $h \in C(Y_m)$ such that $h$ vanishes on a neighborhood $N$ of $x$ in $Y_m$. Set $\tilde g = h \circ q_m$. Then $\tilde g \in J_x$ in $\tA$, and $\tilde g|_{K_m}=g$, so $||f-\tilde g||_{K_m} < \eps$.

Set $F=K_m$ and set $E=\{x \in X: |f(x)-\tilde g(x)| \geq \eps\,\}\,.$
Then $E$ and $F$ satisfy the conditions for \jcsobs, so there is a function $a \in \tA$ with $\normX{a}=1$ such that $a \equiv 0$ on a neighborhood of $E$ and $a \equiv 1$ on a neighborhood of $F$.
We have $\normX{a(f-\tilde g)} < \eps$.
Set $b=(1-a)(f-\tilde g)$. Then $b\equiv 0$ on a neighborhood of $K_m$ and
$||b-(f-\tilde g)||_X = \normX{a(f-\tilde g)} < \eps$. Thus $b+\tilde g \in J_x$, and $||(b+\tilde g)-f||_X < \eps$.
This shows that (in $\tA$) $J_x$ is dense in $M_x$, as required.

(vii) This is similar to (vi), and we leave the details to the reader. We note only that the function $g$ from (vi) should now have the form $g_1 f|_{K_m}$ for some $g_1$ in $A_m$ which vanishes on a relatively open neighborhood of $x$ in $K_m$. This function $g_1$ should then be extended to a function $\tilde g_1 \in J_x$ in $\tA$, at which point we set $\tilde g = f \tilde g_1$. This ensures that the function $b+\tilde g$ from (vi) is in $f J_x$.
\end{proof}

\section{Construction of specific examples}
Recall that, for a compact plane set $K$, the uniform algebra $R(K)$
 is the uniform closure in $C(K)$ of the set of restrictions to $K$ of rational functions with no poles in $K$.

\begin{proof}[Proof of  Theorem \ref{discexample}]
Let $K$ be a compact set in the plane such that $R(K)$ is regular but nontrivial.  (An example of such a set was constructed by McKissick \cite{Mc} and is commonly referred to as McKissick's Swiss cheese.  McKissick's example is also presented in \cite[pp.~344--355]{St-book}, and a substantial simplification of part of the argument involved is given in \cite{Ko}.)  Set $A=R(K)$ and $X=\oD$.  The set $K$ is nowhere dense in the plane, and the uniform algebra $R(K)$ is natural.  Thus the uniform algebra $\tA$ furnished by Theorem~\ref{maintheorem} is essential, natural, and regular.
\end{proof}

\begin{proof}[Proof of  Theorem \ref{peakpointexample}]

Let $E$ be McKissick's Swiss cheese \cite{Mc} mentioned in the proof of Theorem~\ref{discexample} above.  Recall that $E$ is nowhere dense in the plane.  Let $K$ be the Cartesian product of $E$ and a circle.  Richard Basener \cite{Bas} (or see \cite[Example~19.8]{St-book}) produced a nontrivial, natural uniform algebra $A$ on $K$ with every point of $K$ a peak point.  Since a solid torus embeds in $\R^3$, the same is true of $K$, and because $E$ is nowhere dense in the plane, $K$ embeds in $\R^3$ so as to be nowhere dense.  
Thus every open subset of the manifold $X$ contains a nowhere dense subspace homeomorphic to $K$.  Now the uniform algebra $\tA$ furnished by Theorem~\ref{maintheorem} is essential and natural, and every point of $X$ is a peak point for $\tA$.

If we wish to ensure, in addition, that $\tA$ has bounded relative units, then instead of 
Basener's example, we may use the nontrivial uniform algebra with bounded relative units on a compact metric space $L$ constructed in \cite[Theorem 3.6]{FeinsteinStronglyRegular}, as long as we ensure that $L$ has topological dimension~1. 
This can be done by starting the construction from McKissick's example, as McKissick's Swiss cheese has topological dimension~1, and the construction preserves the topological dimension of the maximal ideal space.
The well-known embedding theorem for spaces of finite topological dimension due to Menger and N\" obeling (\cite[Theorem~V~2]{HW} or \cite[Theorem~50.5]{Mu}) then gives that $L$ embeds in $\R^3$.  Consequently every open subset of the manifold $X$ contains a nowhere dense subspace homeomorphic to $L$, and applying Theorem~\ref{maintheorem} with $L$ in place of $K$ yields the desired example.
\end{proof}

\begin{proof}[Proof of  Theorem \ref{sphereexample}]

This result does not follow directly from Theorem~\ref{maintheorem} because the requirement that the algebra be invariant under the action of the 3-torus precludes defining the algebra in terms of the behavior of functions on arbitrarily small sets.  Instead we combine ideas from the proof of Theorem~\ref{maintheorem} with the proof of \cite[Theorem~2.4]{I2009}.

Let $K$ be a compact set in the plane such that $R(K)$ is nontrivial but is such that the only Jensen measures for $R(K)$ are the point masses (for instance McKissick's Swiss cheese).
Then $K$ is nowhere dense in the plane.
Consider the open quarter disc 
$$Q=\{ a + bi \in \C: a > 0, b > 0, \,\, {\rm and} \,\, |a|^2 + |b|^2 < 1\}.$$  
The argument in the first paragraph of the proof of Theorem~\ref{maintheorem} can be repeated to obtain a sequence $\{K_n\}_{n=1}^\infty$ of disjoint, nowhere dense subspaces of $Q$ each homeomorphic to $K$ such that $\bigcup_{n=1}^\infty K_n$ is dense in $Q$ and ${\rm diam} (K_n)\rightarrow 0$.  For each $n$, define $X_n$ by
$$X_n = \bigl\{(z_1, z_2, z_3) \in S : |z_1| + |z_2|i \in K_n \bigr\}.$$
Then clearly each $X_n$ is a compact subset of $S$ that is invariant under the action of the $3$-torus.  Note that the functions $z_1, z_2, $ and $z_3$ have no \hbox{zeros} on $X_n$.  Let $\pi_n: X_n \rightarrow K_n$ be defined by $\pi_n(z_1, z_2, z_3) = |z_1| + |z_2|i$.  Now let $B_n$ be the uniform algebra on $X_n$ generated by the functions $z_1, z_2, z_3, z_1^{-1}, z_2^{-1}, z_3^{-1}$ and all functions of the form $g \circ \pi_n$ with $g \in R(K_n)$.  Then $B_n$ is clearly invariant under the action of the $3$-torus.  Finally let $A=\{f\in C(S): f|X_n\in B_n \hbox{\ for\ all\ $n$}\}$.  Then $A$ is invariant under the action of the $3$-torus.  Clearly $A$ contains the ball algebra $A(S)$.  Thus the proof will be complete once we show that $A$ is natural and essential.

The quotient space obtained from $\overline Q$
 by identifying each $K_n$ to a point is metrizable by Lemma~\ref{quotient},
 so the real-valued continuous functions on this quotient space separate points.  It follows that each set of antisymmetry for $A$ is either contained in some $X_n$ or is a singleton.  It is shown in the proof of \cite[Theorem~2.4]{I2009} that the algebras $B_n$ are natural.  That $A$ is also natural now follows from the fact, used above to prove Theorem~\ref{maintheorem}(i), that the maximal ideal space of a uniform algebra is the union of the maximal ideal spaces of its restrictions to its maximal sets of antisymmetry.

The proof that $A$ is essential is similar to the proof that $\tA$ is essential in Theorem~\ref{maintheorem}.  Let $\pi: S \rightarrow \overline Q$ be defined by $\pi(z_1, z_2, z_3) = |z_1| + |z_2|i$.   Because $A$ is invariant under the action of the
3-torus, the essential set for $A$ is also invariant under that action.  Consequently, to show that $A$ is essential, it suffices to show that for every nonempty open set $U$ of $Q$, the set $\pi^{-1}(U)$ intersects the essential set for $A$.
Let $U$ be an arbitrary nonempty open set of $Q$.  By the Baire category theorem we know that there is a point $p$ in $U$ lying in none of the $K_n$.  Now choose $r>0$ such that the ball $B_{2r}(p)$ is contained in $U$.  Note that there must be infinitely many $K_n$ that intersect the ball $B_r(p)$.  Because ${\rm diam} (K_n)\rightarrow 0$, it must then be that for some positive integer $m$, the set $K_m$ is contained in $U$.  Then $X_m=\pi^{-1}(K_m)$ is contained in $\pi^{-1}(U)$.  The restriction of each function in $A$ to $X_m$ lies in $B_m$, and it is shown in the proof of \cite[Theorem~2.4]{I2009} that $B_m$ is a nontrivial uniform algebra.  It follows that $\pi^{-1}(U)$ intersects the essential set for $A$.
\end{proof}

\section{Proof of Theorem \ref{discnonexample}}

\begin{proof}[Proof of  Theorem \ref{discnonexample}]
We suppose that there is such a uniform algebra $A$ on a compact
two-dimensional $C^1$-manifold-with-boundary $M$ and derive a contradiction.
Let $F$ be a collection of $C^1$-smooth functions that generate $A$.  
 Following Freeman \cite{freeman}, we define the {\it exceptional set\/} $E$ of $F$ by
$$E=\{p\in M: df(p)\wedge dg(p)=0\ \hbox{for all $f,g$ in $F$}\}.$$
Note that $E$  is  a closed set in $M$.  By \cite[Theorem~4.1]{freeman}, the essential set for $A$ is contained in the exceptional set.  Thus we must have  $E=M$.  Note that this says that at each point $p$ of $M$, the vector space of complex differentials spanned by the set $\{df(p):f\in F\}$ has dimension at most one.  Consequently, if there is a function $f$ in $F$ whose real and imaginary parts form a local coordinate system for $M$ on some open set $U$, then every function in $A$ is holomorphic on $U$ in the complex coordinate system given by $f$.  (See \cite[p.~43]{freeman}.)  This contradicts the regularity
of $A$, so there is no such function $f$ in $F$.  We conclude that for each point $p$ in $M$, the vector space of {\it real\/} differentials spanned by the set
$$\{du(p): \hbox{$u$ is the real or imaginary part of a function $f\in F$}\}$$
has dimension at most one.  But then the functions in $A$ fail to separate points on $M$, contrary to the definition of a uniform algebra.
\end{proof}

\subsection*{Acknowledgements}
The research in this paper was begun while the second author was visiting the United Kingdom supported by the London Mathematical Society (LMS~Scheme~2, grant~ref~21212). Progress was also made on the paper while the second author was a visitor at Indiana University.  He thanks the Department of Mathematics for its hospitality.


\begin{thebibliography}{BC87}

\bibitem{AI1} J. T. Anderson and A. J. Izzo, {\it A peak point theorem for uniform algebras generated by smooth functions on a two-manifold\/}, {Bull.\ London\ Math.\ Soc.\ }{\bf 33} (2001), 187--193.

\bibitem{AI2} J. T. Anderson and A. J. Izzo,  {\it Peak point
theorems for uniform algebras on manifolds\/}, Math.\ Zeit.\  {\bf 261\/} (2009), 65--71.

\bibitem{AI3} J. T. Anderson and A. J. Izzo,  {\it A peak point
theorem for uniform algebras on real-analytic varieties\/}, Math.\ Ann.\  (accepted).


\bibitem{AIW1} J. Anderson, A. Izzo and J. Wermer, {\it Polynomial approximation on three-dimensional real-analytic submanifolds of $\mathbb{C}^n$\/}, {Proc.\ Amer.\ Math.\ Soc.\ }{\bf 129} (2001) no. 8, 2395--2402.

\bibitem{AIW2} J. T. Anderson, A. J. Izzo and J. Wermer, {\it Polynomial approximation on real-analytic varieties in $\mathbb{C}^n$\/}, {Proc.\ Amer.\ Math.\ Soc.\ }{\bf 132} (2003) no. 5, 1495--1500.




\bibitem{Bas} R. F.  Basener, {\it On rationally convex hulls\/},
Trans.\ Amer.\ Math.\ Soc.\ {\bf 182} (1973), pp.~353--381.

{\jred \bibitem{Bear} Bear, H. S.,
\emph{Complex function algebras},
Trans.\ Amer.\ Math.\ Soc.\ \textbf{90} (1959), 383–-393.}

\bibitem{browder} A. Browder, {\it Introduction to Function Algebras\/}, Benjamin, New York (1969).

\bibitem{Co} B. J. Cole, {\it One-Point Parts and the Peak Point Conjecture\/}, Ph.D. dissertation, Yale Univ., 1968.

\bibitem{Dales}
H.~G. Dales, \emph{Banach algebras and automatic continuity}, London
  Mathematical Society Monographs, New Series, vol.~24, The Clarendon Press
  Oxford University Press, New York, 2000.

{\jred \bibitem{dePaepe} P. J. de Paepe,
\emph{Essential function algebras with large \v{S}ilov boundary},
Math.\ Scand.\ \textbf{43} (1978), 325–-328.}

{\jred \bibitem{FeinsteinStronglyRegular}
J.~F. Feinstein,
\emph{A nontrivial, strongly regular uniform algebra},
J. London\ Math.\ Soc.\ \textbf{45} (1992), 288--300.}

\bibitem{Feinstein} J. F. Feinstein, {\it Trivial Jensen measures without regularity\/}, Studia Math.\ {\bf 148\/} (2001), 67--74.


{\jred \bibitem{FeinsteinMorris}
J.~F. Feinstein,
\emph{A counterexample to a conjecture of {S}. {E}. {M}orris},
Proc.\ Amer.\ Math.\ Soc.\ \textbf{132} (2004), 2389--2397.}

\bibitem{feinheath2007}
J.~F. Feinstein and M.~J. Heath,
\emph{Regularity and amenability conditions for uniform algebras}, Function Spaces, 159--169, Contemp. Math., \textbf{435}, Amer.\ Math.\ Soc., Providence, RI, 2007

{\jred \bibitem{feinheath2010}
J.~F. Feinstein and M.~J. Heath, {\em Swiss cheeses, rational
  approximation and universal plane curves}, Studia Math., \textbf{196} (2010),
  289--306.}

{\jred \bibitem{2012FeinsteinMortini}
J.~F.~Feinstein and R.~Mortini, {\em Partial regularity and $t$-analytic sets for Banach function algebras}, Math.\ Z., \textbf{271} (2012), 139--155.}

{\jred \bibitem{FeinsteinSomerset2000}
J.~F.~Feinstein and  D.~W.~B.~Somerset, {\em Non-regularity for Banach function algebras}, Studia Math., \textbf{141} (2000), 53--68.}

\bibitem{freeman} M. Freeman, {\it Some conditions for uniform
approximation on a manifold\/}, in: Function Algebras, F. Birtel (ed.),
Scott, Foresman and Co., Chicago, 1966, pp.~42--60.

\bibitem{Ga} T. W. Gamelin,
{\it Uniform Algebras\/}, 2nd ed., Chelsea Publishing Company, New York, NY, 1984.

\bibitem{HW}
W. Hurewicz and H. Wallman, \emph{Dimension Theory}, Princeton University Press, Princeton, 1948.

\bibitem{izzo_counterexample} A. J. Izzo, {\it Failure of polynomial approximation on polynomially convex subsets of the sphere\/}, Bull.\ London\ Math.\ Soc.\ {\bf 28} (1996), 393--397.

\bibitem{I2009} A. J. Izzo, {\it Uniform algebras on the sphere invariant
under group actions\/}, Math.\ Ann.\ {\bf 344\/} (2009), 989--995.

\bibitem{Jameson}

G. J. O. Jameson,
\emph{Topology and normed spaces}, Chapman and Hall, London, 1974.

\bibitem{Kaniuth}
E.~Kaniuth,
{\em A course in commutative Banach algebras},
Graduate Texts in Mathematics \textbf{246}, Springer, New York, 2009.


\bibitem{Ko} T. W. K\"orner, {\it A cheaper Swiss cheese\/}, Studia Math.\ {\bf 83\/} (1986), 33-36.

\bibitem{Mc} R. McKissick, {\it A nontrivial normal sup norm algebra\/}, Bull.\ Amer.\ Math.\ Soc.\ {\bf 69\/} (1963), 391--395.

\bibitem{Mu} J. R. Munkres, {\it Topology\/}, 2nd ed., Prentice-Hall,
Upper Saddle River, New Jersey, 2000.

\bibitem{St-book} E. L. Stout,
{\it The Theory of Uniform Algebras\/},
Bogden \& Quigley, New York, 1971.

\bibitem{stout_book2} E.L. Stout, \emph{Polynomial Convexity}, Birkh\"auser Boston Inc., Boston, MA, 2007.

\bibitem{tomiyama} 
J. Tomiyama, \emph{Some remarks on antisymmetric decompositions of function algebras},
Tohoku Math. J., {\bf 16\/} (1964), 340--344.

\end{thebibliography}
\end{document}